\journalname{}
\begin{document}

\title{Toric cubes\thanks{AE was supported by the Miller Institute at UC Berkeley.
PH was supported by NSF grant DMS-1002636 and the Ruth
Michler Prize of the Association for Women in Mathematics.
BS was supported by NSF grants DMS-0757207 and DMS-0968882.
We are grateful to Saugata Basu, Louis Billera and Rainer Sinn for helpful communications.}
}
%\subtitle{Do you have a subtitle?\\ If so, write it here}

%\titlerunning{Short form of title}        % if too long for running head

\author{Alexander Engstr\"om \and Patricia Hersh \and Bernd Sturmfels}

%\authorrunning{Short form of author list} % if too long for running head

\institute{
Alexander Engstr\"om \at
Department of Mathematics \\
Aalto University \\
 P.O. Box 11100 \\
FI-00076 Aalto, Finland \\
\email{alexander.engstrom@aalto.fi}
\and
Patricia Hersh \at
Department of Mathematics \\ 
     North Carolina State University \\
     Box 8205 \\
      Raleigh,  NC 27605, USA \\
\email{plhersh@ncsu.edu} 
\and
Bernd Sturmfels\at
Department of Mathematics \\
University of California  \\ Berkeley, CA 94720, USA\\ 
\email{bernd@math.berkeley.edu}
}

\date{ }
% \date{Received:  date / Accepted: date }
% The correct dates will be entered by the editor

\maketitle

\begin{abstract}
A toric cube is a subset of the standard cube  defined by
binomial inequalities.
These basic semialgebraic sets are precisely the 
images of standard cubes under monomial maps. We study toric cubes
 from the perspective of
topological combinatorics. Explicit decompositions as
CW-complexes are constructed. Their
 open cells are interiors of toric cubes and 
their boundaries are subcomplexes. 
The motivating example of a  toric cube
is the edge-product space in phylogenetics, and
our work generalizes results known for that space.

% \keywords{First keyword \and Second keyword \and More}
% \PACS{PACS code1 \and PACS code2 \and more}
% \subclass{MSC code1 \and MSC code2 \and more}
\end{abstract}

\section{Introduction.}

The standard $n$-dimensional cube $[0,1]^n$ is a commutative monoid  under 
coordinatewise multiplication. In this article, we examine the 
natural class of submonoids of that monoid described next.
A {\em binomial inequality} has the form
\begin{equation}
\label{eq:binoineq}
x_1^{u_1} x_2^{u_2} \cdots x_n^{u_n} \,\, \leq \,\, 
      x_1^{v_1} x_2^{v_2} \cdots x_n^{v_n} , 
\end{equation}
where the $u_i$ and $ v_j$ are non-negative integers.
A {\em toric precube} is a subset $\mathcal{C}$ of the cube $[0,1]^n$ that is 
defined by a finite collection of binomial inequalities. 
A {\em toric cube} $\mathcal{C}$ is
toric precube that coincides with the closure of its strictly positive points.
Equivalently, a toric cube is a subset $\mathcal{C}$ of the standard cube $[0,1]^n$ that is
defined by binomial inequalities (\ref{eq:binoineq}) and also satisfies
 $\, \mathcal{C} \,=\, \overline{\mathcal{C} \,\cap\,(0,1]^n}$.
Example \ref{pre-cube-example} illustrates the difference between a toric precube
and a toric cube.

By definition, every toric cube is a basic closed  semialgebraic set in  $\mathbb{R}^n$.
Thus, the present article can be regarded as a case study in  real algebraic
geometry \cite{BCR}, with focus on a class of highly structured combinatorial objects.

Our first result is a parametric representation of toric cubes. We fix $n$ monomials
${\bf t}^{a_1},{\bf t}^{a_2},$ $ \ldots,{\bf t}^{a_n}$ in $d$
unknowns $t_1,t_2,\ldots,t_d$. The representing map
 is a monoid homomorphism from the $d$-cube to the $n$-cube:
\begin{equation}
\label{eq:alphamap}
 f \,:\, [0,1]^d \mapsto [0,1]^n \,,
\,\,(t_1,\ldots,t_d) \mapsto ({\bf t}^{a_1},{\bf t}^{a_2},\ldots,{\bf t}^{a_n}) .
\end{equation}
Our first result states that the image of any such
 monomial map of cubes is a toric cube
and, conversely, all toric cubes admit such a parametrization:

\begin{theorem}  \label{thm:one}
The toric cubes $\,\mathcal{C}$ in $[0,1]^n$ are precisely
the images of other cubes $[0,1]^d$, for any positive integer $d$, under the 
 monomial maps  $f$ into $[0,1]^n$.
\end{theorem}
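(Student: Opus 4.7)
The plan is to set up a dictionary via the logarithmic substitution $y_i = -\log x_i$ and $s_j = -\log t_j$. Under this substitution $[0,1]^n$ corresponds to the extended orthant $[0,\infty]^n$, the strictly positive locus $(0,1]^n$ corresponds to $\mathbb{R}_{\geq 0}^n$, a binomial inequality $\mathbf{x}^u \leq \mathbf{x}^v$ becomes the linear inequality $(u-v)\cdot y \geq 0$, and the monomial map $f$ becomes the non-negative linear map $s \mapsto M^T s$, where $M$ is the $d \times n$ matrix with columns $a_1,\ldots,a_n$. The two halves of the theorem then reduce, on the strictly positive strata, to the two directions of Minkowski--Weyl duality between vertex and facet descriptions of a rational polyhedral cone in $\mathbb{R}_{\geq 0}^n$. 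Continuity of $f$ and compactness of $[0,1]^d$ always yield $f([0,1]^d) = \overline{f((0,1]^d)}$, and by hypothesis a toric cube satisfies $\mathcal{C} = \overline{\mathcal{C}\cap (0,1]^n}$; the real work is to match these closures on the strata $\{x_i = 0\}$.

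For the converse direction, I would start with a toric cube $\mathcal{C}$ defined by binomials $\mathbf{x}^{u_k} \leq \mathbf{x}^{v_k}$. The log image of $\mathcal{C}\cap (0,1]^n$ is the rational polyhedral cone $K = \{y \in \mathbb{R}_{\geq 0}^n : (u_k-v_k)\cdot y \geq 0 \text{ for all }k\}$. By Minkowski--Weyl, $K$ is generated by finitely many rational rays, which can be chosen in $\mathbb{Z}_{\geq 0}^n$ after clearing denominators (they lie in $\mathbb{R}_{\geq 0}^n$ because $K$ does). Using these $d$ rays as the rows of an integer matrix defines the exponents $a_i$ of a monomial map $f$ with $f((0,1]^d) = \mathcal{C}\cap(0,1]^n$, and closing up gives $f([0,1]^d) = \mathcal{C}$.

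For the forward direction, let $C := f([0,1]^d)$ and introduce the semigroup $Q = \{(u,v) \in \mathbb{Z}_{\geq 0}^{2n} : \sum u_i a_i \geq \sum v_i a_i\}$, which is finitely generated by Gordan's lemma. Let $P \subseteq [0,1]^n$ be the toric precube cut out by the binomial inequalities $\mathbf{x}^u \leq \mathbf{x}^v$ corresponding to a finite generating set of $Q$. The inclusion $C \subseteq P$ is immediate, and $C \cap (0,1]^n = P \cap (0,1]^n$ follows from the log-coordinate cone duality above. The main obstacle is the reverse inclusion $P \subseteq C$ at points with some vanishing coordinates. Given $x \in P$ with zero set $S = \{i : x_i = 0\}$, set $T := \bigcup_{j\notin S} \mathrm{supp}(a_j) \subseteq [d]$. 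I expect to show that if $\mathrm{supp}(a_i) \subseteq T$ for some $i \in S$, then $a_i$ is coordinatewise dominated by some non-negative integer combination $\sum_{j\notin S} u_j a_j$, producing a pair $(u, e_i) \in Q$ whose binomial inequality $\mathbf{x}^u \leq x_i$ fails at $x$ (the left-hand side is positive while $x_i = 0$) and contradicts $x \in P$. Thus $\mathrm{supp}(a_i) \not\subseteq T$ for every $i \in S$; setting $t_k = 0$ for $k \in [d] \setminus T$ then forces $\mathbf{t}^{a_i} = 0$ exactly when $i \in S$, and the remaining equations $\mathbf{t}^{a_j} = x_j$ for $j \notin S$ define a monomial map on $(0,1]^T$ whose positive image is parametrized by the already-handled positive case.
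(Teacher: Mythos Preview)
Your converse direction is essentially identical to the paper's: both pass to log-space, invoke Minkowski--Weyl to write the cone $\log(\mathcal{C}^+)$ as the image of an orthant under an integer linear map, exponentiate, and then close up using $\mathcal{C}=\overline{\mathcal{C}^+}$.

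Your forward direction, however, takes a genuinely different route. The paper only shows that $\mathcal{C}^+$ is cut out in $(0,1]^n$ by binomial inequalities and then defers the boundary analysis to a separate Lemma (``cubification of a precube''), whose proof introduces slack variables $\epsilon_i$, passes to the saturated lattice ideal, extracts its toric prime component, and reads off the final binomial inequalities from a Markov basis. You instead prove directly that the monomial image $C=f([0,1]^d)$ coincides with the precube $P$ cut out by Gordan generators of the semigroup $Q=\{(u,v):\sum u_ia_i\geq\sum v_ia_i\}$ of valid binomial inequalities. Your support argument is correct: if $x\in P$ has zero set $S$ and some $i\in S$ has $\mathrm{supp}(a_i)\subseteq T=\bigcup_{j\notin S}\mathrm{supp}(a_j)$, then $N\sum_{j\notin S}a_j\geq a_i$ for $N$ large yields $(u,e_i)\in Q$, and since the generators of $Q$ close under multiplication any $x\in P$ satisfies all of $Q$, giving the contradiction $0<\mathbf{x}^u\leq x_i=0$. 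The final step also goes through: restricting to coordinates $T$ and $[n]\setminus S$, every $(u',v')$ valid for the restricted map extends by zero to an element of $Q$ (the coordinates outside $T$ contribute nothing because $(a_j)_k=0$ there for $j\notin S$), so $(x_j)_{j\notin S}$ lies in the positive image of the restricted map by the already-established cone duality.

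What each approach buys: yours is more elementary and self-contained, avoiding toric ideals and Markov bases entirely, and it exhibits an explicit finite set of defining inequalities for $C$ (a Gordan basis of $Q$). The paper's approach is more modular---it isolates a general ``cubification'' algorithm that turns \emph{any} precube into a toric cube, not just those arising as monomial images---at the cost of heavier algebraic machinery. One stylistic note: your last paragraph is phrased as a sketch (``I expect to show\ldots'', ``parametrized by the already-handled positive case''); in a final write-up you should spell out the domination bound $N\sum_{j\notin S}a_j\geq a_i$ and the extension-by-zero argument for the restricted inequalities explicitly.
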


\begin{example} \label{ex:ex1}
Let $n=d=3$.
The image of the monomial map 
\[  f \,: \,[0,1]^3 \rightarrow [0,1]^3 \, , \,\,
(x,y,z)\mapsto (xy, yz, xz) = (a,b,c)\] 
is the three-dimensional toric cube depicted in Figure \ref{fig:zwei}.
 It consists of all  points $(a,b,c)$ in $ [0,1]^3$
that satisfy the three  inequalities $a\ge bc, b\ge ac $ and $ c\ge ab$.
\end{example}

The intersection of a toric cube with a coordinate subspace 
is homeomorphic to a convex polyhedral cone. This is seen by taking the 
logarithm of the positive coordinates. The collection of cells 
from these polyhedral cones glue together, but sometimes further 
refinements are required to avoid the situation in Example
\ref{pre-cube-example} when only a part of a boundary cell
is glued onto  %the interior of 
a higher-dimensional open~cell. 

It is a problem of topological combinatorics to carry out
these refinements in a systematic manner.
The resolution
of this problem is our second result:

\begin{theorem} \label{thm:two}
Every toric cube can be realized as a CW-complex whose open cells are 
interiors of toric cubes. This CW complex has the further property that the boundary of 
each open cell is a subcomplex.
\end{theorem}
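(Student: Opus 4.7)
The plan is to induct on the dimension $n$ of the ambient cube and to decompose $\mathcal{C}$ along the coordinate stratification. For each $S \subseteq [n]$, set $\mathcal{C}_S := \mathcal{C} \cap \{x \in [0,1]^n : x_i > 0 \Leftrightarrow i \in S\}$. Restricting the defining binomial inequalities of $\mathcal{C}$ to the coordinate face $\{x_i = 0 \text{ for } i \notin S\}$ yields a binomial system whose toric cube is the closure of $\mathcal{C}_S$ inside that face, viewed as a copy of $[0,1]^S$. By the inductive hypothesis, each such lower-dimensional toric cube already carries a CW decomposition of the required kind, and so does $\bigcup_{S \subsetneq [n]} \overline{\mathcal{C}_S}$, which is the topological boundary of $\mathcal{C}_{[n]}$ inside $\mathcal{C}$. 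It therefore suffices to decompose the top open stratum $\mathcal{C}_{[n]} = \mathcal{C} \cap (0,1]^n$ into open cells and glue the result coherently to the given structure on the boundary.

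On $(0,1]^n$ the logarithmic change of coordinates $y_i = \log x_i$ turns each binomial inequality $\mathbf{x}^u \leq \mathbf{x}^v$ into the linear inequality $(u-v) \cdot y \leq 0$ and the box constraints into $y_i \leq 0$. Thus $\mathcal{C}_{[n]}$ corresponds to a closed polyhedral region $P \subseteq (-\infty,0]^n$, and the natural polyhedral face decomposition of $P$ cuts it into relatively open polyhedra. Under $\exp$, each such open face is mapped homeomorphically onto an open subset of $\mathcal{C}_{[n]}$ that, by Theorem \ref{thm:one} applied to the restricted monomial parametrization, is the interior of a toric cube. This produces a candidate decomposition of $\mathcal{C}_{[n]}$.

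The main obstacle is compatibility with the boundary strata, which is precisely the difficulty foreshadowed by the pre-cube example. The direction $y_i \to -\infty$ corresponds to $x_i \to 0$, so the recession cone of $P$ in the negative coordinate directions encodes how $\mathcal{C}_{[n]}$ accumulates onto $\overline{\mathcal{C}_S}$ for $S \subsetneq [n]$. The faces of $P$ need not restrict at infinity to unions of cells in the inductively constructed decompositions on those lower strata. To fix this, one passes to a common polyhedral refinement: simultaneously subdivide $P$, and the polyhedra describing each $\mathcal{C}_S$ in logarithmic coordinates, by a single polyhedral fan that refines all of their recession fans in coordinate directions. Since every hyperplane used in the subdivisions has rational normal vector (coming from a binomial), each refined relatively open polyhedron still corresponds, via $\exp$, to the interior of a toric cube in the sense of Theorem \ref{thm:one}.

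Once this common refinement is in place, the remaining assertions are formal. Each refined open polyhedron is homeomorphic to an open ball and its closure is a finite union of lower-dimensional refined polyhedra of the same form, so the boundary of every open cell is automatically a subcomplex; gluing the top stratum to the inductive decomposition of the boundary produces the desired CW structure on $\mathcal{C}$. The heart of the proof is thus the combinatorial bookkeeping needed to arrange compatible polyhedral refinements across all $2^n$ coordinate strata; the topological and algebraic content reduces to the linear/polyhedral picture in log coordinates together with Theorem \ref{thm:one}.
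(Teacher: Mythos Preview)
Your strategy coincides with the paper's: stratify $\mathcal{C}$ by the support of its points, pass to log-coordinates on each stratum to obtain a polyhedral cone, take the Tuffley partition (relative interiors of faces of those cones, pulled back via $\exp$), and then subdivide so that closures of cells are unions of cells. The paper organizes this not as an induction on the ambient dimension $n$ but as a filtration by a linear extension of the inclusion order on supports, and it makes two steps precise that your sketch leaves open.

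First, characteristic maps. You assert that each refined open cell is an open ball and that its closure is a union of lower cells, but a CW structure also requires a continuous map from a \emph{closed} disc whose restriction to the interior is a homeomorphism onto the open cell. The closure of a cell of the top stratum contains points with some $x_i=0$, which lie ``at infinity'' in the log picture, so no single polyhedral chart covers the closed cell. The paper handles this in Proposition~\ref{prop:charMap} by truncating the log-cone to a polytope and composing a barycentric-subdivision homeomorphism with the monomial map; without something of this kind you do not yet have a CW complex.

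Second, the refinement step. Your ``single polyhedral fan that refines all recession fans in coordinate directions'' is not a well-posed object: the log-cones for different strata $S$ live in different ambient spaces $\mathbb{R}^S$, and since each is already a cone its recession cone is itself, so ``recession'' does not encode the passage $x_i\to 0$. What is actually needed is that whenever a cell of one stratum closes up onto a lower stratum, its trace there is a union of cells of that stratum's decomposition. The paper enforces this iteratively via Lemma~\ref{prop:intToricCubes} and Lemma~\ref{prop:buildCW}, subdividing the already-built complex each time a new support level is attached. Your induction on $n$ hits the same obstacle: attaching the top stratum may force you to refine the CW structure on the boundary that the inductive hypothesis handed you, so the induction as stated does not close. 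The paper's filtration-with-refinement is exactly the ``combinatorial bookkeeping'' you acknowledge in your last paragraph but do not carry out.
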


\begin{figure} 
\qquad
\includegraphics[width=10cm]{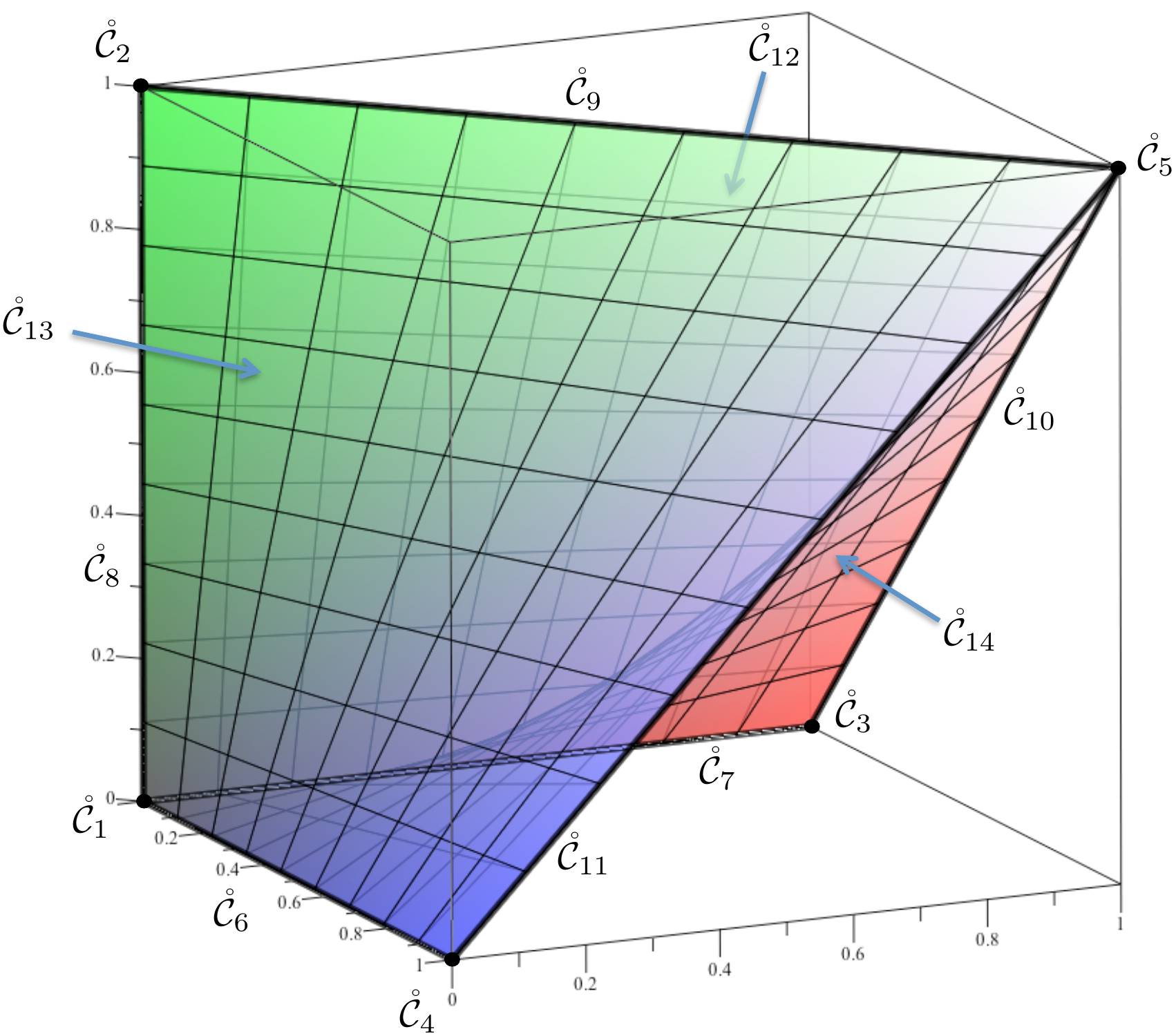}
\caption{The toric cube in Example \ref{ex:ex1} with its 
CW cell decomposition as in Example \ref{ex:toricCube}.}
\label{fig:zwei}
\end{figure}

\begin{figure}
\qquad \qquad \qquad \qquad \qquad
\includegraphics[width=6.5cm]{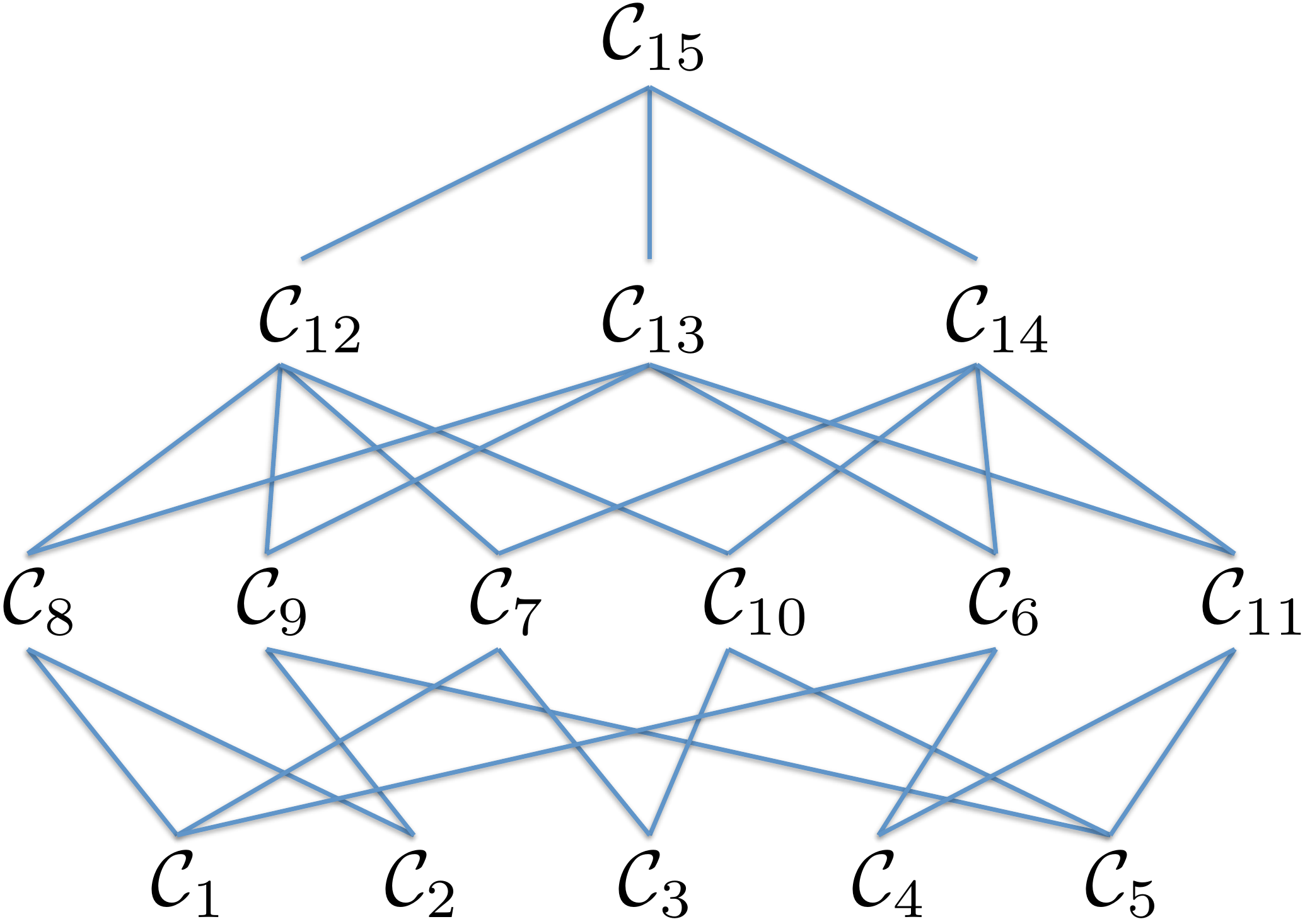}
\caption{The face poset of the CW complex defined in 
Example \ref{ex:toricCube} and drawn in Figure~\ref{fig:zwei}.}
\label{fig:drei}
\end{figure}

Our study of toric cubes is in part motivated by phylogenetics.
In that context, the topological spaces studied by Kim \cite{Kim} 
and Gill, Linusson, Moulton and Steel \cite{GLMS,MS}, are 
built from particularly well-behaved instances of toric cubes.
The toric cube in Figure \ref{fig:zwei} is the smallest instance of this:
it is the edge-product space of a tree with three leaves, as shown in \cite[Figure 1]{GLMS}.
Note that we have redrawn the same poset in Figure \ref{fig:drei} 
to indicate our cell labeling.

In those papers, every point of a toric cube corresponds to
parameters of a statistical model that describe a
phylogenetic tree. The observed data can be encoded as
points in the standard cube from which the toric cube is realized.
To infer the parameters of the model  amounts to 
running a maximum likelihood algorithm to locate the point that in
some sense is closest to the observed ones. 
Standard algorithms, such as Expectation-Maximization,
behave badly on closed sets if the desired point is reached on
the boundary. Thus, it is advantageous to break the
space into open cells and restrict to the
relevant pieces upon which one could get the algorithms to function
well. For spaces where the points correspond to random
models, there is often a first natural stratification
by combinatorial type; in the case of phylogenetic trees, each
open cell corresponds to a certain tree topology and the points in
that cell describe the lengths of  the branches.
It may be speculated that our construction here
may prove to be useful also for other models in
algebraic statistics \cite{DSS}.

Cellular complexes that are described only by their
combinatorial incidence relations, like simplicial complexes,
are called {\em regular}. Gill, Linusson, Moulton and 
Steel \cite{GLMS}  proved that the specific toric cubes
arising in the  above phylogenetic context are
regular. In the first {\tt arXiv} version of this paper, we
conjectured that all complexes built
from toric cubes share this property.
Our conjecture was subsequently proved
by Basu, Gabrielov and Vorobjov, using
the theory of monotone semi-algebraic maps
they had developed in \cite{BGV,BGV2}.
They established:

\begin{theorem}[Basu, Gabrielov and Vorobjov, Section 6 of \cite{BGV2}]
Every toric cube is homeomorphic to a
closed ball,  and every CW-complex
whose closed cells are toric cubes
is regular. In particular, the
CW-complexes for toric cubes constructed in the proof of
 Theorem~Ê\ref{thm:two}~are~regular.
\end{theorem}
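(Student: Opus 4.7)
The plan is to reduce both assertions to the theory of monotone semi-algebraic maps developed in \cite{BGV,BGV2}, taking as input the parametric description of toric cubes provided by Theorem \ref{thm:one}. The key observation I would exploit is that a monomial map $f:[0,1]^d \to [0,1]^n$ has very rigid structure: when restricted to the strictly positive quadrant $(0,1]^d \to (0,1]^n$, coordinatewise logarithm conjugates $f$ to the restriction of a linear map $\mathbb{R}^d \to \mathbb{R}^n$. Hence its fibers over interior points of the open toric cube are affine slices of a box, in particular convex and therefore contractible. Moreover, $f$ is face-compatible: the preimage of any coordinate face of $[0,1]^n$ is a union of coordinate faces of $[0,1]^d$, and the restriction of $f$ to each such face is again a monomial map of the same type, to which the same analysis applies.

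First I would use these two properties to verify that $f$ satisfies the monotonicity hypothesis of \cite{BGV2} with respect to the stratification of $[0,1]^n$ by subsets on which prescribed coordinates vanish. Concretely, for every such coordinate stratum $L$ one must check that the preimage $f^{-1}(L) \cap [0,1]^d$, together with its intersections with coordinate subfaces of $[0,1]^d$, is contractible. By the face-compatibility observation this reduces to the contractibility of fibers of smaller monomial maps, so a straightforward induction on $d$ handles it. Once monotonicity is established, the main theorem of \cite{BGV2} applies directly and tells us that the image of the closed ball $[0,1]^d$ under $f$ is homeomorphic to a closed ball. This proves the first clause: every toric cube is a closed ball.

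To upgrade this to the statement about CW-complexes, I would proceed by induction on the dimension of the skeleta. Assume the $(k-1)$-skeleton is already a regular CW-subcomplex. Any $k$-cell $\mathcal{C}$ is a toric cube and, by the first half, its closure is a closed $k$-ball. By Theorem \ref{thm:two} its boundary is a subcomplex, so by the inductive hypothesis $\partial \mathcal{C}$ is a regular CW-decomposition built out of lower-dimensional balls. A regular CW-decomposition of the frontier of a topological $k$-ball is necessarily a topological $(k{-}1)$-sphere, and the attaching map of $\mathcal{C}$ is therefore a homeomorphism. Thus the complex is regular.

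The principal obstacle I anticipate is the careful verification of the BGV monotonicity axioms. The log-linear picture is transparent on the open positive cell, but one must track how it degenerates along the boundary strata, and confirm that the refined cell structure produced in the proof of Theorem \ref{thm:two} is fine enough to interact cleanly with the coordinate stratification used in \cite{BGV2}. After that technical checking, the inductive passage to regularity of the full CW-complex is essentially routine bookkeeping.
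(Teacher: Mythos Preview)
The paper contains no proof of this theorem. It is stated as a result of Basu, Gabrielov and Vorobjov, with the proof located in Section~6 of \cite{BGV2}; the paragraph immediately preceding the statement makes explicit that the authors originally conjectured the result and that BGV subsequently established it ``using the theory of monotone semi-algebraic maps they had developed in \cite{BGV,BGV2}.'' There is therefore no argument in the present paper against which to compare your proposal.

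That said, your sketch is in the spirit of the approach the paper attributes to BGV: you take the monomial parametrization supplied by Theorem~\ref{thm:one}, exploit the log-linear structure to verify the hypotheses of the monotone-map theorem of \cite{BGV2}, and conclude that the image is a closed ball. The passage from the ball property to regularity is then essentially immediate and does not really require the skeleton-by-skeleton induction you set up: once each closed cell $\bar\sigma$ is a ball with $\sigma$ as its interior, one has $\Phi_\alpha(\partial D^k)=\bar\sigma\setminus\sigma\cong S^{k-1}$, which is precisely the paper's working definition of regularity in Remark~\ref{rmk:niceCW}. Whether the monotonicity axioms of \cite{BGV2} are satisfied as cleanly as you outline---particularly the contractibility of the various restricted fibers along boundary strata---is the genuine content, and that verification lives in \cite{BGV2}, not here.
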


\section{Parametrization and Implicitization}

Toric cubes are  objects of real algebraic geometry
that have a very nice combinatorial structure.
In particular, they are basic semialgebraic sets,
that is, they can be defined by conjunctions of polynomial inequalities.

\begin{figure}
\begin{center}
\begin{tabular}{cc}
\includegraphics[width=4.5cm]{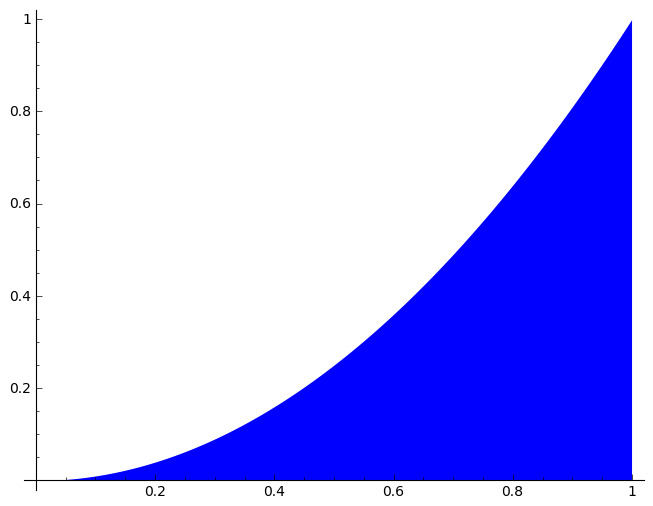} & \includegraphics[width=4.5cm]{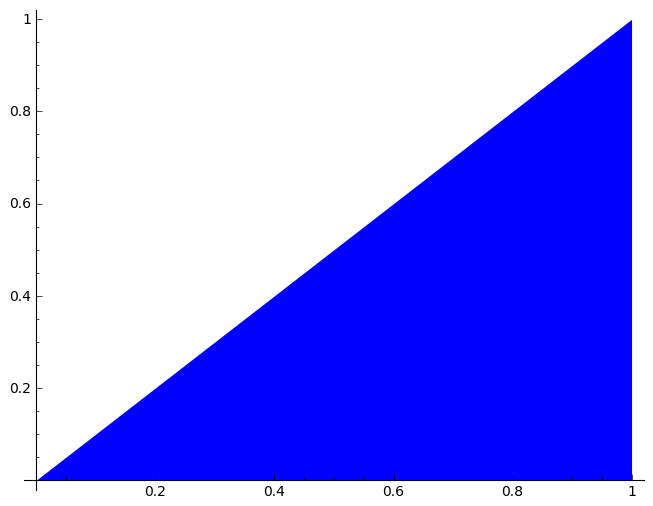} \\
$(x, x^2y)$ & $(x,xy^2)$ \\
\includegraphics[width=4.5cm]{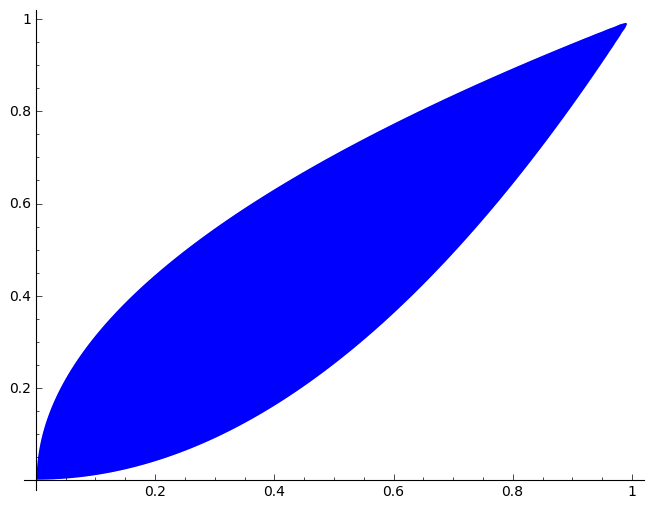} & \includegraphics[width=4.5cm]{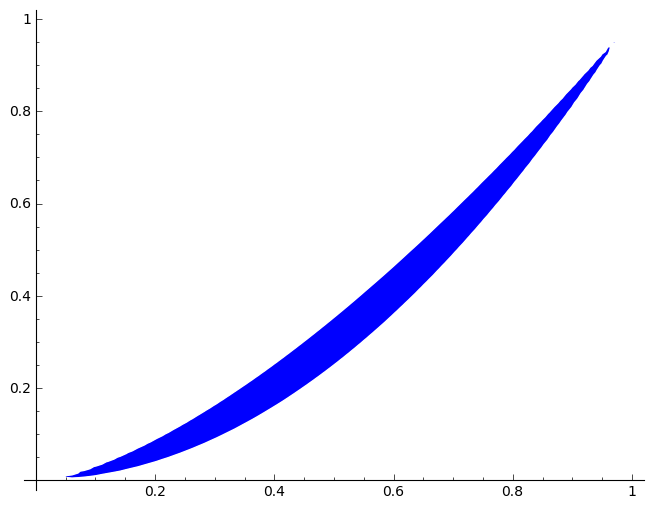} \\
$(x^2y,xy^2)$ & $(x^2y, x^3y^2)$ \\
\end{tabular}
\end{center}
\caption{Parameterizations of two-dimensional toric cubes inside the square $[0,1]^2$.} \label{fig:2d}
\end{figure}

We begin our discussion with an illustration of
toric cubes in dimension~$2$.

\begin{example}
Let $n=d= 2$ and consider monomial self-maps of the square:
$$ f \,:\, \,[0,1]^2 \,\rightarrow \,[0,1]^2 \,,\,\,\, (x,y) \,\mapsto \, (x^i y^j, x^k y^l ). $$
If $i,j,k,l > 0$ then the toric cube $\mathcal{C} = f\bigl([0,1]^2\bigr)$
is a region bounded by two monomial curves from $(0,0)$ to $ (1,1)$.
If precisely one of $i,j,k,l$ is zero then $\mathcal{C}$ will include two edges of $[0,1]^2$.
Some specific instances are shown in Figure~\ref{fig:2d}.
\end{example}

The image of the cube $[0,1]^d$ under an arbitrary polynomial map
is always a semialgebraic set, that is, it can be defined by a Boolean
combination of polynomial inequalities. This follows from
Tarski's Theorem on Quantifier Elimination \cite[\S 5.2]{BCR}.
However, such a semialgebraic set is usually not basic:
conjunctions do not suffice.
For a concrete example, consider the map
$$ [0,1]^2 \rightarrow \mathbb{R}^2, \,\,
(x,y) \mapsto  ( x+y,x+2y^2-2y ) . $$
Its image in $\mathbb{R}^2$ is not a basic semialgebraic set
since one edge of the square is mapped into the
interior of the image,
as seen in~Figure \ref{fig:eins}.

\begin{figure}
\begin{center}
\includegraphics[width=8cm]{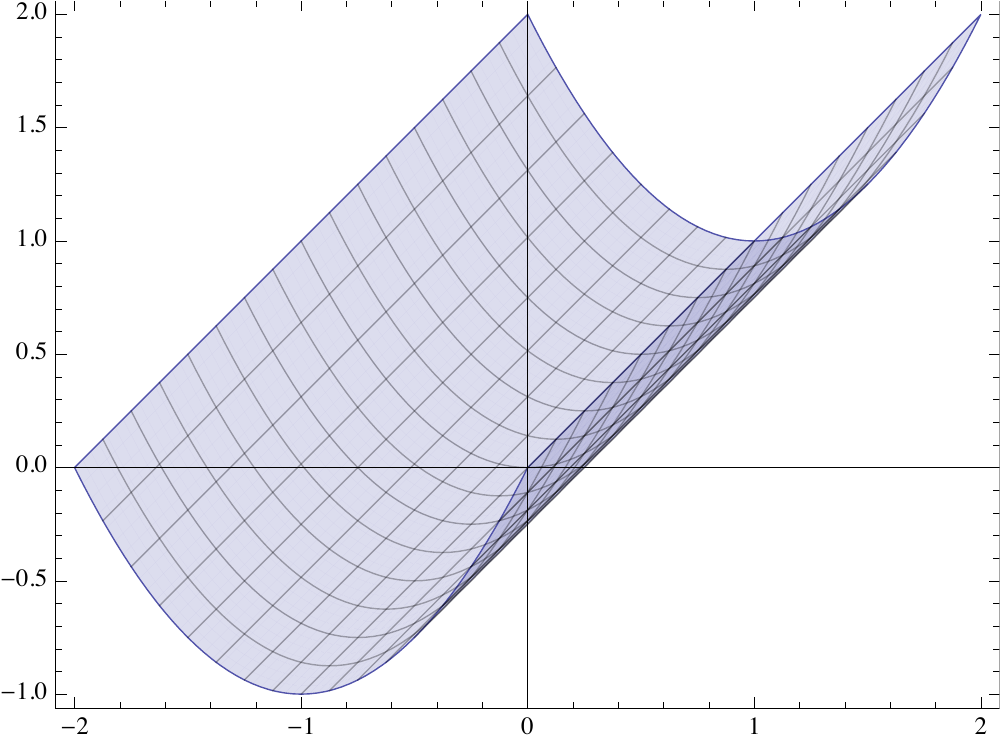}
\end{center}
\caption{This image of the square is not basic in $\mathbb{R}^2$.}
\label{fig:eins}
\end{figure}

Our first result ensures that such a folding never occurs for monomial maps.

\begin{proof}[Theorem \ref{thm:one}]
We first prove that monomial images of cubes are toric cubes.
Let $\mathcal{C}$ denote the image in $[0,1]^n$
of the map $f$ in (\ref{eq:alphamap}).
Write $\mathcal{C}^+ = \mathcal{C} \,\cap\,(0,1]^n$
for its subset of positive points.
Then $\mathcal{C}^+$ is non-empty,
and every point in $\mathcal{C}\backslash \mathcal{C}^+$
is the limit of a sequence of points in $\mathcal{C}^+$.
Let  ${\rm log}( \mathcal{C}^+)$ denote the image
of $\mathcal{C}^+$ in $ \mathbb{R}^n$ under the coordinatewise 
logarithm map, with reversed signs. The  cone ${\rm log}(\mathcal{C}^+)$ is 
 the image of the positive orthant
$\,{\rm log}((0,1]^d) =  \mathbb{R}^d_{\geq 0}\,$ under the linear map
$A : \mathbb{R}^d \rightarrow \mathbb{R}^n$, where
$A$ is the matrix whose rows are the vectors
${\bf a}_1,{\bf a}_2,\ldots,{\bf a}_n$.
In particular, ${\rm log}(\mathcal{C}^+)$ is a convex
polyhedral cone that is defined over $\mathbb{Q}$.
By the Weyl-Minkowski Theorem \cite{Zie}, we can write
the cone ${\rm log}(\mathcal{C}^+)$ as the solution set of
a finite system of linear inequalities of the form
\begin{equation}
\label{eq:linineq}
u_1 x_1 + u_2 x_2 + \cdots + u_n x_n \,\, \leq \,\, 
     v_1 x_1 + v_2 x_2 + \cdots + v_n x_n,
\end{equation}
where the $u_i$ and $v_j$ are non-negative integers.
By applying the exponential map, we conclude
that $\mathcal{C}^+$ is defined,
as a subset of $(0,1]^n$, by a finite set
of binomial inequalities.
Since $\mathcal{C} = \overline{ \mathcal{C}^+ } $,
the result follows from Lemma~\ref{lem:precube} below.

For the converse, we can reverse the reasoning
in the argument above. Suppose that $\mathcal{C} $ is a toric cube,
so it is the closure of  $\mathcal{C}^+ = \mathcal{C} \,\cap\,(0,1]^n$.
The cone ${\rm log}(\mathcal{C}^+)$ is defined by the linear inequalities
(\ref{eq:linineq}) corresponding to the binomial inequalities (\ref{eq:binoineq}) 
that define $\mathcal{C}$.
 We can write this cone as the image of 
some positive orthant $\mathbb{R}^d_{\geq 0}$ under some linear map.
That linear map is given by an integer matrix $A$ with
$d$ columns and $n$ rows ${\bf a}_1,{\bf a}_2,\ldots,{\bf a}_n$.
 The image of $(0,1]^d$ under the corresponding monomial
 map $f$ equals $\mathcal{C}^+$, and hence
 the image of the closed cube $[0,1]^d$ under $f$ is $\mathcal{C}$. \qed
 \end{proof}

 \begin{lemma} \label{lem:precube}
 Let $\mathcal{C}$ be a toric precube and 
 $\mathcal{C}^+ = \mathcal{C} \,\cap\,(0,1]^n$
 its subset of points with positive coordinates.
 Then the closure $\overline{\mathcal{C}^+}$ is a toric cube.
 \end{lemma}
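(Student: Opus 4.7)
The plan is to realize $\overline{\mathcal{C}^+}$ as the solution set in $[0,1]^n$ of a finite system of binomial inequalities. The other axiom of a toric cube is automatic, since $\mathcal{C}^+$ is closed in $(0,1]^n$ and hence $\overline{\mathcal{C}^+}\cap(0,1]^n=\mathcal{C}^+$.

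Following the setup of Theorem~\ref{thm:one}, I would pass to logs to identify $\mathcal{C}^+$ with a rational polyhedral cone $P\subseteq\mathbb{R}^n_{\geq 0}$. By Weyl--Minkowski, $P$ has a finite facet presentation by integer linear inequalities, each of which exponentiates to a binomial inequality $x^{u_k}\leq x^{v_k}$. Let $\mathcal{D}\subseteq[0,1]^n$ be the toric precube cut out by these; then $\mathcal{D}^+=\mathcal{C}^+$, and continuity of binomial inequalities gives $\overline{\mathcal{C}^+}\subseteq\mathcal{D}$. The entire content of the lemma lies in the reverse inclusion.

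To prove $\mathcal{D}\subseteq\overline{\mathcal{C}^+}$, take $p\in\mathcal{D}$ with zero set $I:=\{i:p_i=0\}$ and positive set $J:=[n]\setminus I$. I would approximate $p$ by positive points $p_\epsilon$ defined by $p_\epsilon|_J:=p|_J$ and $p_\epsilon|_I:=\epsilon^{c_I}$ for a positive integer vector $c_I$ indexed by $I$. A log-side computation shows that $p_\epsilon\in\mathcal{C}^+$ for all small $\epsilon>0$ precisely when the ray $(\overline{p}|_J,\,0_I)+s(0_J,\,c_I)$ lies in $P$ for $s\gg 0$. Choosing $c_I$ generically inside the relative interior of the face $P\cap\{y_J=0\}$, only facets with $u_k|_I=v_k|_I=0$ are tight on the direction $(0,\,c_I)$, and the containment condition on such facets reduces to binomial inequalities purely in the $J$-coordinates, which hold on $p$ because $p\in\mathcal{D}$. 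Thus the construction succeeds provided the face $P\cap\{y_J=0\}$ has full support on $I$, meaning it contains an element with $y_i>0$ for every $i\in I$.

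The main obstacle is this full-support condition. I would argue it by contradiction: if some $i^*\in I$ satisfied $y_{i^*}=0$ for every $y\in P$ with $y|_J=0$, then an extreme-ray argument applied to the projection $\pi_{J\cup\{i^*\}}(P)$ would yield a positive integer $c$ with the pointwise bound $y_{i^*}\leq c\sum_{j\in J}y_j$ on $P$, equivalently the binomial consequence $\prod_{j\in J}x_j^c\leq x_{i^*}$. Evaluated at $p$, this would force a strictly positive number to be at most $0$. Showing that this consequent binomial is implied by the defining binomials of $\mathcal{D}$ is the most delicate point: on positive points it follows by multiplying the facet binomials $x^{u_k}\leq x^{v_k}$ raised to the non-negative integer powers given by a Farkas decomposition of $ce_J-e_{i^*}$, together with the trivial inequalities $x_i\leq 1$; extending the derivation to points with zero coordinates can be ensured either by choosing the Farkas decomposition so that the cancelled monomial factors are supported on $J$ alone, or more robustly by enlarging the defining list of $\mathcal{D}$ from the facet binomials of $P$ to a Hilbert basis of the dual cone $P^\vee$, which leaves $\mathcal{D}^+=\mathcal{C}^+$ unchanged but forces every binomial consequence to hold throughout $\mathcal{D}$. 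Once the full-support condition is in place, the approximation $p_\epsilon\to p$ shows $p\in\overline{\mathcal{C}^+}$ and completes the proof.
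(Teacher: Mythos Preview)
Your route is genuinely different from the paper's: they introduce slack variables, pass to the binomial ideal, saturate to a lattice ideal, pick off the unique toric prime component, and read off the enlarged list of binomial inequalities from its generators. You instead stay in polyhedral geometry and try to show directly that a specific finite list of binomials (facet normals of $P$, or a Hilbert basis of $P^\vee$) already cuts out $\overline{\mathcal{C}^+}$ on the nose.

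There is, however, a real gap in your step~2. The implication ``full support on $I$ $\Rightarrow$ only facets with $u_k|_I=v_k|_I=0$ are tight on $(0_J,c_I)$'' is false. Take the paper's own Example~\ref{pre-cube-example}: $P=\{A+C\le B+D,\ B+C\le A+D\}\cap\mathbb{R}^4_{\ge 0}$, and $p=(0,0,c,d)$ with $c<d$, so $I=\{1,2\}$, $J=\{3,4\}$. The face $F=P\cap\{C=D=0\}$ is the ray $\{(t,t,0,0)\}$, which \emph{does} have full support on $I$; yet the only direction $c_I=(1,1)$ makes both nontrivial facets tight, and neither of them has $u|_I=v|_I=0$. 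The residual $J$-condition coming from these tight facets is precisely $c\ge d$, which is \emph{not} a facet inequality of $P$ and is not forced by $p\in\mathcal{D}$, because the two facet binomials $ac\ge bd$, $bc\ge ad$ become $0\ge 0$ on the face $a=b=0$. So your case split sends this $p$ to the ``approximation succeeds'' branch, while in fact $p\notin\overline{\mathcal{C}^+}$. The correct hypothesis for your approximation argument is not ``full support'' but $\dim(P\cap\{y_J=0\})=|I|$, and that stronger hypothesis can fail even when full support holds.

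Your Hilbert-basis patch is pointed in the right direction, but you invoke it only to rescue step~3, whereas the breakdown above is in step~2. Even if you redefine $\mathcal{D}$ using a Hilbert basis of $P^\vee$, you still need to show that for every facet $w$ tight on $(0_J,c_I)$ the \emph{restricted} functional $(w|_J,0_I)$ lies in $P^\vee$ (so that the corresponding $J$-binomial is among, or is a consequence valid at zero coordinates of, your defining list). That statement is true, but it is exactly the content of the lemma and needs its own argument; it is where the paper's saturation/toric-ideal machinery is doing the work that your sketch has not yet supplied.
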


\begin{proof}
Let $\mathcal{C}$ be a toric precube in $[0,1]^n$ that is defined by
a system of $N$ binomial inequalities (\ref{eq:binoineq}).
We present an algorithm that creates a finite list of
additional binomial inequalities such that the solution set
of the new enlarged system equals $\overline{\mathcal{C}^+}$.
Thus we give an algorithm for the {\em cubification of a precube}.

The procedure starts with the following step.
For each of the $N$ given inequalities (\ref{eq:binoineq}) we introduce a new
variable $\epsilon_i$ and we consider the binomial 
$$ x_1^{u_1} x_2^{u_2} \cdots x_n^{u_n} \,\, - \,\,
      x_1^{v_1} x_2^{v_2} \cdots x_n^{v_n} \cdot \epsilon_i. $$
Thus, we now have a collection of $N$ binomials in $N+n$ variables.
Let $I$ be the ideal in the polynomial ring generated by these binomials
and compute its saturation $I^{\rm sat}$ with respect to all 
unknowns.  We replace $I$ by the corresponding lattice
ideal $I^{\rm sat}$. Algorithmically, this corresponds to
computing a {\em Markov basis}, say, in the software {\tt 4ti2}.
For background on Markov bases see \cite[\S 1]{DSS}.

Since $I^{\rm sat}$ is a lattice ideal,
the complex variety $V(I^{\rm sat})$ is a finite union of toric varieties.
These components are
all orbit closures of the same torus action, and
only one of them intersects the positive orthant in $\mathbb{R}^{n+N}$.
The corresponding toric ideal $I^{\rm tor}$ is a
prime component of the radical ideal $I^{\rm sat}$.
Now, the toric variety $V(I^{\rm tor})$ is the closure of its
non-zero points. The same holds for the real points and
the non-negative points in the toric variety:
$$ V(I^{\rm tor}) \,\cap\, [0,1]^{N+n}  \,\, = \,\,
\overline{V(I^{\rm tor}) \,\cap \,(0,1]^{N+n}}.$$
The projection of this set onto the $n$ coordinates $x_1,\ldots,x_n$
is precisely the set  $\overline{\mathcal{C}^+}$. We obtain a
system of binomial inequalities that defines $\overline{\mathcal{C}^+}$
from the generators $\,{\bf x}^{\bf u} \epsilon^{\bf b} - {\bf x}^{\bf v} \epsilon^{\bf c}\,$
of the toric ideal $I^{\rm tor}$.
Namely, we take the inequality  ${\bf x}^{\bf u} \leq {\bf x}^{\bf v}$
if ${\bf b} = 0$ and ${\bf c} \not= 0$, we take the inequality
${\bf x}^{\bf u} \geq {\bf x}^{\bf v}$
if ${\bf b} \not= {\bf 0}$ and ${\bf c} = {\bf 0}$,
and we ignore the generator if both ${\bf b}$ and ${\bf c}$ are non-zero.
The resulting finite system of binomial inequalities shows that 
$\overline{\mathcal{C}^+}$ is a toric cube. \qed
\end{proof}

\begin{example}\label{pre-cube-example}
Let $\mathcal{C}$ denote the toric precube in $[0,1]^4$ defined by the inequalities
$$ a c \geq bd  \quad \hbox{and} \quad bc \geq a d . $$
This precube is not a toric cube because it contains the entire
face of points $(0,0,c,d)$ while every point in $\mathcal{C}^+$ satisfies $c \geq d$.
The toric cube $\overline{\mathcal{C}^+}$ is   cut out by the three inequalities
$\,a c \geq bd $,
$\, bc \geq a d $ and $ c \geq d$.
 To compute a parametric representation of $\overline{\mathcal{C}^+}$,
we take the negated logarithm and consider the cone
\begin{equation}
\label{eq:4cone}  \{ \,(A,B,C,D) \in \mathbb{R}_{\geq 0}^4 \,:\,  A+C \leq B+D, \,B+C \leq A+D\, \}. 
\end{equation}
This cone has the five extreme rays $ (1,1,0,0), (1,0,0,1),(0,1,0,1),(0,0,1,1)$ and $(0,0,0,1)$.
The $4 \times 5$-matrix with these columns specifies the linear map
$\,\mathbb{R}_{\geq 0}^5 \rightarrow \mathbb{R}_{\geq 0}^4$ whose
image is the cone (\ref{eq:4cone}). Writing the rows of that matrix as monomials,
we obtain the desired parametrization of the toric cube $\overline{\mathcal{C}^+}$:
$$ [0,1]^5 \rightarrow [0,1]^4 \,:\,(t_1,t_2,t_3,t_4,t_5) \mapsto (t_1 t_2 , t_1 t_3, t_4, t_2 t_3 t_4 t_5 ) = (a,b,c,d). $$
\end{example}

\section{Cell Decomposition}

In this section we study toric cubes through the lens of
topological combinatorics, and we prove  Theorem
\ref{thm:two}. Our task is to decompose a given toric cube as a CW-complex whose open cells are interiors 
of toric cubes with the further property that the boundaries of these cells are subcomplexes. 
We begin with the following basic observation concerning the topology of toric cubes.

\begin{remark}
Every toric cube is contractible. This is seen from the monomial parametrization $f$ as in (\ref{eq:alphamap}).
Namely, the map $g(t_1, \ldots, t_d,s)= f(s t_1, \ldots, s t_d)$ gives a
deformation retraction of the toric cube onto $\mathbf{0}$.
\end{remark}

To build a CW-complex from toric cubes
it is necessary to put toric cubes on the $0$-boundaries. To this end,
monomial maps like $x \mapsto (x,0,0)$ are allowed. This is consistent
with the previous definition after removing redundant zeros.
The singleton $\{\mathbf{0}\}$ is considered to be a toric
cube of dimension $0$. In this section toric cubes are mainly described 
by way of  their monomial parametrizations.

The CW-complexes treated in this text are
well-behaved, and we give a restricted definition that is
suitable for our purposes.
For more general versions see \cite{lundellWeingram}.
Let $D^m$ denote the closed $m$-dimensional disc, and let
$\partial D^m = S^{m-1}$ denote its boundary. Its interior, denoted
$\mathring{D}^m=D^m \backslash \partial D^m$, is an open $m$-cell.

\begin{definition}
An {\em $m$-dimensional  CW-complex} is a 
topological subspace $X^m$ of $\mathbb{R}^n$
that is constructed recursively in the following way:
\begin{itemize}
\item[(1)] If $m=0$ then $X^0$ 
is a discrete set of points.
\item[(2)] If $m>1$ then $X^m$ 
is given by the following data:
\begin{itemize}
\item[a.] an $(m-1)$-dimensional 
CW-complex  $X^{m-1}$ in $\mathbb{R}^n;$
\item[b.] a partition 
$\coprod_{\alpha \in I} \sigma_{\alpha}^m$ 
of $\,X^m \backslash X^{m-1}\,$
into open $m$--cells;
\item[c.] for every index $\alpha \in I,$ there is a
\emph{characteristic map} 
$\Phi_\alpha: D^m \rightarrow
 X^m$ such that 
 $\Phi_\alpha(\partial D^m) \subseteq X^{m-1}$ 
 and the restriction of $\Phi_\alpha$ to the open cell
 $\mathring{D}^m$ is a homeomorphism 
 with image $\sigma_\alpha^m.$
\end{itemize}
\end{itemize}
\end{definition}

%There are redundant parts in this definition, 
%but we want to emphasize that 
One common way to identify a CW-complex 
for a space $X$ is to partition $X$
into open cells of different dimensions
and to give characteristic maps for each cell in that partition.
We demonstrate this for our running example.

\begin{example}\label{ex:toricCube}
Consider the toric cube $\mathcal{C}$ given by the 
monomial map $f(x,y,z) = (xy, yz, xz)$ in
Example \ref{ex:ex1}.
We define a CW-complex for $\mathcal{C}$ with $15$ cells by
\[
\begin{array}{lll}
f_{15} (x,y,z) =  (xy,yz,xz), & \mathcal{C}_{15}= f_{15}([0,1]^3), & \mathring{\mathcal{C}}_{15}= f_{15}((0,1)^3),  \\
f_{14} (x,y) =  (x,y,xy), & \mathcal{C}_{14}= f_{14}([0,1]^2), & \mathring{\mathcal{C}}_{14}= f_{14}((0,1)^2),  \\
f_{13} (x,y) =  (x, xy, y), & \mathcal{C}_{13}= f_{13}([0,1]^2), & \mathring{\mathcal{C}}_{13}= f_{13}((0,1)^2),  \\
f_{12} (x,y) =  (xy,x,y), & \mathcal{C}_{12}= f_{12}([0,1]^2), & \mathring{\mathcal{C}}_{12}= f_{12}((0,1)^2),  \\
f_{11} (x) =  (1,x,x), & \mathcal{C}_{11}= f_{11}([0,1]), & \mathring{\mathcal{C}}_{11}= f_{11}((0,1)),  \\
f_{10} (x) =  (x,1,x), & \mathcal{C}_{10}= f_{10}([0,1]), & \mathring{\mathcal{C}}_{10}= f_{10}((0,1)),  \\
f_{9} (x) =  (x,x,1), & \mathcal{C}_{9}= f_{9}([0,1]), & \mathring{\mathcal{C}}_{9}= f_{9}((0,1)),  \\
f_{8} (x) =  (0,0,x), & \mathcal{C}_{8}= f_{8}([0,1]), & \mathring{\mathcal{C}}_{8}= f_{8}((0,1)),  \\
f_{7} (x) =  (0,x,0), & \mathcal{C}_{7}= f_{7}([0,1]), & \mathring{\mathcal{C}}_{7}= f_{7}((0,1)),  \\
f_{6} (x) =  (x,0,0), & \mathcal{C}_{6}= f_{6}([0,1]), & \mathring{\mathcal{C}}_{6}= f_{6}((0,1)),  \\
& \mathcal{C}_{5}= \{  (1,1,1) \},  &  \mathring{ \mathcal{C}}_{5}= \{  (1,1,1) \},  \\
& \mathcal{C}_{4}= \{  (1,0,0) \},  &  \mathring{ \mathcal{C}}_{4}= \{  (1,0,0) \},  \\
& \mathcal{C}_{3}= \{  (0,1,0) \},  &  \mathring{ \mathcal{C}}_{3}= \{  (0,1,0) \},  \\
& \mathcal{C}_{2}= \{  (0,0,1) \},  &  \mathring{ \mathcal{C}}_{2}= \{  (0,0,1) \},  \\
& \mathcal{C}_{1}= \{  (0,0,0) \},  &  \mathring{ \mathcal{C}}_{1}= \{  (0,0,0) \}.  \\
\end{array}
\]
Here, the open cells of the CW-complex are $\mathring{ \mathcal{C}}_{1}, 
\ldots,  \mathring{\mathcal{C}}_{15}$ and the closed cells are 
$\mathcal{C}_{1}, \ldots,  \mathcal{C}_{15}.$ In Figure~\ref{fig:zwei}, 
the CW-complex is drawn with all open faces marked.
The closed cells, ordered by containment, form the face poset 
in Figure~\ref{fig:drei}. This poset is identical to the one seen in the
phylogenetic application \cite[Figure 1]{GLMS}.
\end{example}

The existence of a CW complex for toric cubes can be derived
from standard  theory \cite{lundellWeingram}. However,
being combinatorialists, we seek 
to find a small explicit one, ideally with the properties 
described in the following remark:

\begin{remark}\label{rmk:niceCW}
If the image of every $\partial D^n$ in the construction 
of a CW-complex is an $(n-1)$--dimensional topological manifold 
(that is, if each point has a neighborhood homeomorphic to the 
Euclidean $(n-1)$--dimensional space), then the CW-complex 
can be encoded combinatorially as the colimit of a diagram of 
spaces on a poset graded by dimension. This strategy was used by 
van Kampen in his thesis for combinatorial descriptions of
cell complexes, and is explained for CW-complexes in
\cite[Chapter 3]{lundellWeingram}. If the 
morphisms in this diagram are homotopic to constant maps, 
then its homotopy colimit is the nerve, which, if described 
as a simplicial complex, is the order complex of the 
aforementioned poset. The 
simplest case of this is when the image of each 
$\partial D^n$ 
is homeomorphic to a sphere. Such a CW-complex is
called \emph{regular}.
\end{remark}

In Example~\ref{ex:toricCube} the monomial map $f$
defining the toric cube and the characteristic map
were the same. However, in general this will not be the case,
e.g.~when the dimension of the image drops relative to the domain.
We typically need to subdivide.
This will be explained in  Example~\ref{ex:subdivide} and 
Proposition~\ref{prop:charMap}. 

Before proceeding,
we recall and introduce some notation.
The map $\log : (0,1]^d \rightarrow [0,\infty)^d$ is defined 
coordinate-wise by the negated logarithm.
This log map is a homeomorphism, and so is its 
inverse exp map. For a toric cube $\mathcal{C}$, its interior is 
best viewed in log-space. The closed polyhedral cone 
$\mathcal{D}=\log(\mathcal{C} \cap (0,1]^d )$ is 
full-dimensional in some lineality space and its interior 
is $\mathring{\mathcal{D}}.$ The interior of the toric cube 
$\mathcal{C}$ is $\mathring{\mathcal{C}}=\exp(\mathring{\mathcal{D}}).$ 
The dimensions of $\mathcal{C},\mathring{\mathcal{C}},\mathcal{D}$
 and $\mathring{\mathcal{D}}$ are all the same. For a 
 zero-dimensional toric cube, set $\mathring{\mathcal{C}}=\mathcal{C}.$

\begin{example}\label{ex:subdivide}
Let $d=4, n=3$ and
consider the toric cube given by the map
$$ f(t_1,t_2,t_3,t_4) \,\,\, = \,\,\, (t_1t_2t_4,t_2t_3,t_3t_4).$$
We want the interior of the image to be our only 
$3$-dimensional cell, but it is the image of an open $4$-cell under the monomial map $f$.
We cannot  use this map together with 
the $4$-dimensional cubical domain right off as a 
characteristic map since this map is not injective on the interior. The cone in log-space of this toric 
cube is spanned by the four rays
\[ r_1=(1,0,0), \,\, r_2=(1,1,0), \,\, r_3=(0,1,1), \,\, r_4=(1,0,1).
\]
A cross section of that cone is a quadrilateral with the vertices
corresponding to the rays in that order. 
The face poset of that quadrilateral has nine elements, labeled
$1,2,3,4,12,23,34,14$ and $1234$. To start constructing a 
characteristic map, we subdivide to get simplicial pieces. 
Our new rays will have the form $r_\sigma = \sum_{i\in \sigma} r_i.$ 
Considering the various $\sigma$ in the face poset $P$, we obtain:
\begin{equation}
\label{eq:ninerays}
\begin{array}{c}
r_{\{1,2,3,4\}}=(3,2,2), \\ 
r_{\{1,2\}}=(2,1,0), \, r_{\{2,3\}}=(1,2,1), \, r_{\{3,4\}}=(1,1,2),\, r_{\{1,4\}}=(2,0,1), \\ 
r_{\{1\}}=(1,0,0), \,\, r_{\{2\}}=(1,1,0), \,\, r_{\{3\}}=(0,1,1), \,\, r_{\{4\}}=(1,0,1). \\ 
\end{array}
\end{equation}
The rays corresponding to the maximal flags in $P$ span
simplicial cones that subdivide the cone spanned by the 
initial four rays. The $3 \times 9$-matrix with column vectors (\ref{eq:ninerays})
defines a new monomial map 
$\, \mathrm{sd}(f):[0,1]^P \rightarrow [0,1]^3\,$ by 
\[ (t_{\{1\}}, t_{\{2\}}, t_{\{3\}}, t_{\{4\}},
t_{\{1,2\}}, t_{\{2,3\}}, t_{\{3,4\}}, t_{\{1,4\}},
t_{\{1,2,3,4\}}) \quad \mapsto
\]
\[\begin{array}{l}
( \, t_{\{1\}} t_{\{2\}} t_{\{4\}} t_{\{1,2\}}^2 t_{\{2,3\}} t_{\{3,4\}} t_{\{1,4\}}^2 t_{\{1,2,3,4\}}^3, \\
\,\,\, t_{\{2\}} t_{\{3\}}t_{\{1,2\}} t_{\{2,3\}}^2 t_{\{3,4\}} t_{\{1,2,3,4\}}^2, \,\,\,\,
\,\, t_{\{3\}} t_{\{4\}} t_{\{2,3\}} t_{\{3,4\}}^2 t_{\{1,4\}}t_{\{1,2,3,4\}}^2).
\end{array}\]
By setting $\, t_1 = t_{\{1\}} t_{\{1,2\}} t_{\{1,4\}} t_{\{1,2,3,4\}} $
and similarly for $t_2,t_3,t_4$, we see that $f$ and $\mathrm{sd}(f)$
have exactly the same image. Thus, they define the same toric cubic.
Guided by the simplicial subdivision above, we define 
$D \subset [0,1]^P$ to be
\begin{center}
\includegraphics{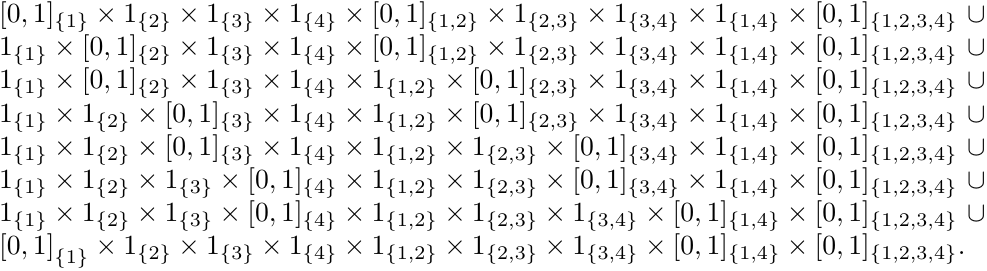}
\end{center}
From the subdivision we can derive that
$\,\mathrm{sd}(f)(D)=\mathrm{sd}(f)([0,1]^P)$ $=f([0,1]^4).$ 
The domain $D$ is $3$-dimensional, as is the toric cube, and 
one can see that the restriction of $\mathrm{sd}(f)$ to the relative interior of $D$
is a homeomorphism onto the interior of 
the toric cube, as required for the characteristic maps. 
What remains to be shown at this point is that $D$ is in fact
a $3$-dimensional ball. This is true, and we present
a general argument in the proof of the next proposition.
\end{example}

\begin{proposition}\label{prop:charMap}
Let $\mathcal{C}\subset [0,1]^n $ be a toric cube and consider the convex polytope
\[ \mathcal{P} \,\,= \,\,\bigl\{ \,\mathbf{y} \in \log( \mathcal{C} \cap (0,1]^n ) \mid \mathbf{y} \cdot \mathbf{1} \leq 1 
\bigr\} .\]
There exists a continuous map $\Phi : \mathcal{P} \rightarrow \mathcal{C}$ 
whose restriction to the interior of $\mathcal{P}$ is a 
homeomorphism onto the interior of $\mathcal{C}$, with the property that the restriction
of $\Phi$  to the boundary of $\mathcal{P}$ maps onto the
 boundary of  $\mathcal{C}.$
\end{proposition}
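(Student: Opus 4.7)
The plan is to verify that $\mathcal{P}$ is a polytope, define $\Phi$ on $\mathring{\mathcal{P}}$ via exponential rescaling, and then extend $\Phi$ continuously to $\partial\mathcal{P}$ using a simplicial refinement in the style of Example~\ref{ex:subdivide}.

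First I would observe that $\mathcal{D}=\log(\mathcal{C}\cap(0,1]^n)$ lies in $\mathbb{R}_{\geq 0}^n$ and is therefore a pointed polyhedral cone; the linear form $\mathbf{y}\cdot\mathbf{1}$ is strictly positive on $\mathcal{D}\setminus\{\mathbf{0}\}$, so $\mathcal{P}$ is bounded and is a genuine polytope of dimension $\dim\mathcal{C}$. On the relative interior I would set $\Phi(\mathbf{y})=\exp\bigl(\mathbf{y}/(1-\mathbf{y}\cdot\mathbf{1})\bigr)$: the radial rescaling $\mathbf{y}\mapsto \mathbf{y}/(1-\mathbf{y}\cdot\mathbf{1})$ is a homeomorphism from $\mathring{\mathcal{P}}$ onto $\mathring{\mathcal{D}}$ (with inverse $\mathbf{z}\mapsto \mathbf{z}/(1+\mathbf{z}\cdot\mathbf{1})$), and $\exp$ is a homeomorphism from $\mathring{\mathcal{D}}$ onto $\mathring{\mathcal{C}}$, so the composition yields the required homeomorphism on the interior.

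The hard part will be extending $\Phi$ continuously across $\partial\mathcal{P}$ and surjecting onto $\partial\mathcal{C}$. The naive formula does not extend continuously at points where the top facet $\{\mathbf{y}\cdot\mathbf{1}=1\}$ meets a proper face of $\mathcal{D}$, since sequences in $\mathring{\mathcal{P}}$ can approach such points with their vanishing coordinates decaying at different relative rates and produce distinct limiting values. I would resolve this exactly as in Example~\ref{ex:subdivide}: introduce a new ray $r_\sigma=\sum_{i\in\sigma}r_i$ for each face $\sigma$ in the face poset of $\mathcal{D}$, form the simplicial fan whose maximal cones correspond to the maximal flags, and on each simplex of the induced subdivision of $\mathcal{P}$ define $\Phi$ by the corresponding refined monomial map $\mathrm{sd}(f)$ attached to that flag. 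Adjacent simplex-wise definitions agree on their common face, so $\Phi$ is globally continuous; the restriction to $\mathring{\mathcal{P}}$ recovers the exponential-rescaling map up to reparametrization; and each proper face of $\mathcal{D}$ parameterizes its own coordinate-vanishing sub-toric-cube of $\mathcal{C}$, yielding surjectivity of $\Phi|_{\partial\mathcal{P}}$ onto $\partial\mathcal{C}$. What would remain is the claim deferred at the end of Example~\ref{ex:subdivide} that the underlying space of this simplicial subdivision is a closed ball homeomorphic to $\mathcal{P}$, which I would prove by induction on the dimension and face poset of $\mathcal{D}$, using that each refined piece is itself a cone over a lower-dimensional instance of the same construction.
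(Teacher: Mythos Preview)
Your strategy---barycentrically subdivide and invoke the refined monomial map $\mathrm{sd}(f)$---is the paper's, but there is a genuine gap. The sentence ``on each simplex of the induced subdivision of $\mathcal{P}$ define $\Phi$ by the corresponding refined monomial map $\mathrm{sd}(f)$ attached to that flag'' is not yet a definition: $\mathrm{sd}(f)$ has domain the cube $[0,1]^P$, and its restriction to a maximal chain $C$ lives on the sub-cube $[0,1]^C\times\{1\}^{P\setminus C}$, whereas the pieces of your subdivision of $\mathcal{P}$ are simplices sitting in $\mathbb{R}^n$. Supplying a coherent identification between those two kinds of pieces is exactly the work the paper does. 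It assembles the sub-cubes into a cubical complex $D\subset[0,1]^P$ and then builds an explicit homeomorphism $\mathcal{P}\to D$ as a composite: first identify $\mathcal{P}$ with the cone $D''$ over the order complex of the face poset $P$; then apply the standard radial simplex-to-cube map $\mathbf{t}\mapsto\bigl(\sum_\sigma t_\sigma/\max_\sigma t_\sigma\bigr)\mathbf{t}$ to reach a complex $D'$; finally apply the coordinatewise antipode $t\mapsto 1-t$ to land in $D$. With this single global homeomorphism in hand, one sets $\Phi=\mathrm{sd}(f)\circ a\circ b\circ c$ on all of $\mathcal{P}$ at once; the agreement on shared faces and the interior-homeomorphism property are then automatic, and no separate inductive argument that the subdivided domain is a ball is needed, since $\mathcal{P}$ is already a polytope.

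The exponential rescaling $\Phi(\mathbf{y})=\exp\bigl(\mathbf{y}/(1-\mathbf{y}\cdot\mathbf{1})\bigr)$ is correct on $\mathring{\mathcal{P}}$ but is a detour: you yourself note it does not extend, and the remark that the piecewise map recovers it ``up to reparametrization'' is vacuous, since any two homeomorphisms $\mathring{\mathcal{P}}\to\mathring{\mathcal{C}}$ differ by a self-homeomorphism of the domain. The paper never introduces this formula; it verifies the interior property directly for $\mathrm{sd}(f)$ restricted to the interior of $D$.
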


\begin{proof}
The cone $\mathcal{D}=\log( \mathcal{C} \cap (0,1]^n)$ 
is spanned by some non-negative integer
rays $r_1,r_2,\ldots, r_d$.
The toric cube is the image of  the monomial map
$f(t_1,t_2,\ldots,t_d)=(m_1,m_2,\ldots,m_n)$ where
$m_j=\prod_{i=1}^d t_i^{r_i \cdot e_j}$ and $e_j$ is the
$j$th unit vector. Without loss of generality we may 
assume that $\,r_1\cdot \mathbf{1} = \cdots = r_d \cdot \mathbf{1}$.

The non-empty subsets $S$ of $\{1,2,\ldots, d\}$ such that
$\{ r_i \mid i\in S \}$ is a minimal set of spanning rays of
a face of $\mathcal{D}$, ordered by inclusion, is a poset $P.$
This poset $P$ is isomorphic to the face poset of $\mathcal{D}$
minus the minimal element.

We fix rays $\,r_\sigma=\sum_{i \in \sigma} r_i\,$ for each $\sigma \in P$,
and we define a monomial map $\,\mathrm{sd}(f):[0,1]^P\rightarrow [0,1]^n\,$
by sending $\,(t_\alpha)_{\alpha \in P}\,$ to 
$\,(m_1^{\mathrm{sd}},m_2^{\mathrm{sd}},\ldots,m_n^{\mathrm{sd}})\,$
where
\[m_j^{\mathrm{sd}} \,\, = \,\,
\prod_{\sigma\in P} t_\sigma^{  \sum_{i\in \sigma} r_i   \cdot e_j}.
\]
The cone in log-space  defined by
$\mathrm{sd}(f)$ is the same as the one for $f,$ but we have
introduced rays that barycentrically subdivide it.
The simplicial cones in that subdivision are indexed by the set
of maximal chains in $P$. We define
\[ D \quad = \bigcup_{\hbox{$C$ maximal chain of $P$}} 
\quad\prod_{\sigma \in P} 
\quad
\left\{ 
\begin{array}{cl}
\, [0,1] & \quad {\rm if} \,\, \sigma \in C, \\
\, \{1\} & \quad {\rm if} \,\, \sigma \not\in C.
\end{array}
\right.  \]
The barycentric subdivision ensures that
$\mathcal{C}=f([0,1]^d)=\mathrm{sd}(f)(D)$. Moreover,
the restriction of $\mathrm{sd}(f)$ to the interior of
$D$ is a homeomorphism onto $\mathring{\mathcal{C}}$.

We now construct a homeomorphism between $D$ and the polytope. We first fix
the antipodal map, componentwise defined by 
$t \rightarrow 1-t,$ to get a 
homeomorphism $a:D'\rightarrow D$ for 
\[ D' \quad =\bigcup_{\hbox{$C$ maximal chain of $P$}}
\quad\prod_{\sigma\in P}
\quad
\left\{
\begin{array}{cl}
\, [0,1] & \quad {\rm if} \,\, \sigma \in C, \\
\, \{ 0\} & \quad {\rm if} \,\, \sigma \not\in C.
\end{array}
\right. \]
Let $\Delta$ be the simplex in $\mathbb{R}^P$ spanned
by the unit vectors and the origin. There is a standard
homeomorphism from $\Delta$ to $[0,1]^P$ which maps
$\,\mathbf{t}=(t_\sigma)_{\sigma \in P} \neq \mathbf{0}\,$ to 
$\,\frac{\sum_{\sigma \in P}t_\sigma}{\max_{\sigma \in P}t_\sigma}\mathbf{t}\,$
and $ \mathbf{0}$ to $\mathbf{0}$. Restricting this
map to get the image $D'$, we obtain a homeomorphism
$b:D'' \rightarrow D'$ for $D''$ as follows.

By construction, $D''$ is the cone with apex
$\mathbf{0}$ over the standard realization of the
order complex of the poset $P$. Since $P$ is the face poset (minus the minimal element)
of the polyhedral cone $\mathcal{D}$, we have a homeomorphism
$c$ from the polytope $\{ \mathbf{y} \in \mathcal{D} \mid \mathbf{y} \cdot \mathbf{1} \leq 1 \}=
\{ \mathbf{y} \in \log( \mathcal{C} \cap (0,1]^n ) \mid \mathbf{y} \cdot \mathbf{1} \leq 1 \}$
to $D''$. The composition of these maps gives 
a map $\Phi$ satisfying the requirements.  \qed
\end{proof}

Consider a toric cube $\mathcal{C}\subseteq [0,1]^n$
and set, for each $I\subseteq \{1,2, \ldots, d \}$,
\[ D_I \,\,= \,\,\prod_{i=1}^d 
\left\{
\begin{array}{cl}
(0,1] & \,\,\textrm{if }i\in I,\\
\{0\} & \,\, \textrm{if }i\not\in I.\\
\end{array}
\right.
\]
The various choices for $I$ allow for the various possibilities for which unknowns among
$t_1,\dots ,t_d$ are strictly positive, with all those not in $I$ being 0.  This choice in turn
dictates which of the parameters derived under barycentric subdivision (as in Example~\ref{ex:subdivide}) are strictly positive and which are $0$. This distinction enables us to embed  $D_I$ into $[0,1]^n$.

For each $I,$ the set $\mathcal{C}\cap D_I$ can be
partitioned into open cells according to the open
cells of its polyhedral cone in log-space, namely the space in which we take lots of the 
nonzero unknowns. The collection
of these open cells is the \emph{Tuffley partition} of
$\mathcal{C}$. The name refers to Christopher Tuffley, whose Masters Thesis, written under the
supervision of Mike Steel, predated \cite{GLMS,MS}.

Unlike in Example~\ref{ex:toricCube},
the Tuffley partition does not always give the
open cells of a CW-complex. 
The main problem is that the boundary of a $d$--dimensional
open cell might intersect a $d'$-dimensional open cell
with $d' \geq d.$ To solve this problem, subdivisions are required.
Moving into log-space, one sees that all  peculiarities
of polyhedral subdivisions are present, but also that
the algorithmic tools from that area are readily accessible
to address them.

\begin{proposition}\label{prop:capTwoIntToricCubes}
If  $\mathcal{C}_1$ and $\mathcal{C}_2$ are toric cubes in $[0,1]^n$,
then there exists a third toric cube $\mathcal{C}_3$ in $[0,1]^n$ such that 
 $\,\mathring{\mathcal{C}}_1 \cap \mathring{\mathcal{C}}_2 =  \mathring{\mathcal{C}}_3.$
\end{proposition}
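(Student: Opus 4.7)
The idea is to realize each toric cube in log-space as a rational polyhedral cone, intersect the cones there, and exponentiate back. First I would reduce to the case where $\mathcal{C}_1$ and $\mathcal{C}_2$ share the same subset $J \subseteq \{1,\ldots,n\}$ of coordinates that vanish identically on them; if these sets differ, no point has the zero-pattern required to lie in both relative interiors, so $\mathring{\mathcal{C}}_1 \cap \mathring{\mathcal{C}}_2 = \emptyset$ and one can take $\mathcal{C}_3$ to be the origin (or another suitable zero-dimensional cube) as a degenerate representative. Working within the coordinate subspace $\{x_j = 0 : j \in J\}$, let $\mathcal{D}_i$ be the log-image of the positive part of $\mathcal{C}_i$; by the proof of Theorem~\ref{thm:one}, each $\mathcal{D}_i$ is a rational polyhedral cone in $\mathbb{R}^{[n]\setminus J}_{\geq 0}$, cut out by the linear inequalities (\ref{eq:linineq}) that arise from the binomials defining $\mathcal{C}_i$.

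Next I would take $\mathcal{D}_3 := \mathcal{D}_1 \cap \mathcal{D}_2$, which is again a rational polyhedral cone since it is defined by the union of the two linear systems. Using a spanning-ray representation of $\mathcal{D}_3$ together with the converse direction of Theorem~\ref{thm:one} (and Lemma~\ref{lem:precube} for cubifying the resulting binomial system), I obtain a toric cube $\mathcal{C}_3 \subset [0,1]^n$ with coordinates in $J$ set to zero, whose log-cone is exactly $\mathcal{D}_3$.

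It then remains to verify $\mathring{\mathcal{C}}_1 \cap \mathring{\mathcal{C}}_2 = \mathring{\mathcal{C}}_3$. Using the description $\mathring{\mathcal{C}}_i = \exp(\mathring{\mathcal{D}}_i)$ from the paragraph preceding Example~\ref{ex:subdivide}, together with the fact that $\exp$ is a homeomorphism from $\mathbb{R}^{[n]\setminus J}_{\geq 0}$ onto $(0,1]^{[n]\setminus J}$, this reduces to the convex-geometric identity
\[
\mathrm{relint}(\mathcal{D}_1) \,\cap\, \mathrm{relint}(\mathcal{D}_2) \;=\; \mathrm{relint}(\mathcal{D}_1 \cap \mathcal{D}_2),
\]
which is standard and holds whenever the left-hand side is nonempty. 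The main obstacle is not the convex geometry itself but the bookkeeping: matching the ``interior'' conventions between a toric cube and its log-cone across the stratification of $[0,1]^n$ by vanishing coordinates. The reduction to a common $J$ is what keeps the log-space picture well-defined and makes the appeal to relative interiors legitimate; once that reduction is in place, the rest of the argument is a direct translation between binomial inequalities, linear inequalities, and polyhedral cones.
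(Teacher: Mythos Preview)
Your approach is essentially the paper's own proof, just carried out in log-space rather than directly with binomial inequalities: the paper simply takes the union of the two defining systems of binomial inequalities (which in log-space is exactly your $\mathcal{D}_1\cap\mathcal{D}_2$), cubifies the resulting precube via Lemma~\ref{lem:precube}, and declares the result to be $\mathcal{C}_3$. Your extra verification via the relative-interior identity $\mathrm{relint}(\mathcal{D}_1)\cap\mathrm{relint}(\mathcal{D}_2)=\mathrm{relint}(\mathcal{D}_1\cap\mathcal{D}_2)$ fills in the step the paper leaves implicit.

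One small wrinkle in your degenerate case: taking $\mathcal{C}_3=\{\mathbf 0\}$ does not yield $\mathring{\mathcal{C}}_3=\emptyset$, since by the paper's convention a zero-dimensional toric cube satisfies $\mathring{\mathcal{C}}=\mathcal{C}$. The same issue lurks when $J_1=J_2$ but $\mathrm{relint}(\mathcal{D}_1)\cap\mathrm{relint}(\mathcal{D}_2)=\emptyset$: the intersection $\mathcal{D}_3$ is then a proper face of each $\mathcal{D}_i$ and has nonempty relative interior, so your $\mathcal{C}_3$ would have $\mathring{\mathcal{C}}_3\neq\emptyset$. The paper's terse proof is silent on this edge case as well; for the intended application (Lemmas~\ref{prop:intToricCubes} and~\ref{prop:buildCW}) one only needs the statement when the intersection of interiors is nonempty, where your argument is clean.
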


\begin{proof}
This is immediate from Theorem~\ref{thm:one}.
The union of the binomial inequalities defining $\mathcal{C}_1$
and those defining $\mathcal{C}_2$ specifies a toric precube. If we take
$\mathcal{C}_3$ to be the cubification of that precube, then
$\mathcal{C}_3$ has the desired properties. \qed
\end{proof}

\begin{lemma}\label{prop:intToricCubes}
If $\mathcal{C}, \mathcal{C}' \subseteq [0,1]^n $ are 
toric cubes with $\mathring{\mathcal{C}}\subseteq \mathring{\mathcal{C}}',$  
then there exist toric cubes 
$\mathcal{C}_1,  \mathcal{C}_2, \ldots,  \mathcal{C}_k$ 
such that $\mathcal{C}=\mathcal{C}_1$ and $  \mathring{\mathcal{C}}'= \cup_{i=1}^k \mathring{\mathcal{C}}_i$
 and $\mathring{\mathcal{C}}_i \cap \mathring{\mathcal{C}}_j = \emptyset$ for
$i \neq j $.
\end{lemma}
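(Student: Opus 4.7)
The plan is to pass to log-space, where the problem becomes a standard construction in polyhedral geometry. By Theorem~\ref{thm:one} and the conventions preceding Example~\ref{ex:subdivide}, $\mathcal{C}$ and $\mathcal{C}'$ correspond to rational polyhedral cones $\mathcal{D}=\log(\mathcal{C}\cap(0,1]^n)$ and $\mathcal{D}'=\log(\mathcal{C}'\cap(0,1]^n)$ in $\mathbb{R}^n_{\geq 0}$. Applying $\log$ to the hypothesis $\mathring{\mathcal{C}}\subseteq\mathring{\mathcal{C}}'$ yields $\mathring{\mathcal{D}}\subseteq\mathring{\mathcal{D}}'$, and taking closures gives $\mathcal{D}\subseteq\mathcal{D}'$.

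The key step is to exhibit a rational polyhedral subdivision $\Sigma$ of $\mathcal{D}'$ that contains $\mathcal{D}$ as a cell and has $\partial\mathcal{D}'$ as a subcomplex. I would construct $\Sigma$ as the common refinement of $\mathcal{D}'$ with the hyperplane arrangement consisting of all facet-defining hyperplanes of both $\mathcal{D}'$ and $\mathcal{D}$ (together, when $\dim\mathcal{D}<\dim\mathcal{D}'$, with the linear hyperplanes cutting $\mathrm{span}(\mathcal{D})$ out of $\mathrm{span}(\mathcal{D}')$). All of these hyperplanes are rational, so every cell of $\Sigma$ is a rational cone inside $\mathbb{R}^n_{\geq 0}$, and $\mathcal{D}$ itself appears as the cell singled out by its defining sign and equality pattern among these linear forms.

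Let $\mathcal{D}=\mathcal{D}_1,\mathcal{D}_2,\ldots,\mathcal{D}_k$ be those cells of $\Sigma$ that are not contained in $\partial\mathcal{D}'$. Because the relative interiors of the cells of any polyhedral subdivision partition its support, and because $\partial\mathcal{D}'$ is a subcomplex of $\Sigma$, the open cones $\mathring{\mathcal{D}}_1,\ldots,\mathring{\mathcal{D}}_k$ partition $\mathring{\mathcal{D}}'$. Theorem~\ref{thm:one} then produces, for each $i$, a toric cube $\mathcal{C}_i\subseteq[0,1]^n$ parametrized by the monomial map whose exponent rows are the spanning rays of $\mathcal{D}_i$, satisfying $\log(\mathcal{C}_i\cap(0,1]^n)=\mathcal{D}_i$ and $\mathring{\mathcal{C}}_i=\exp(\mathring{\mathcal{D}}_i)$. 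Pushing the partition through the homeomorphism $\exp$ delivers the desired decomposition $\mathring{\mathcal{C}}'=\bigsqcup_{i=1}^k\mathring{\mathcal{C}}_i$ with $\mathcal{C}_1=\mathcal{C}$.

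The main obstacle is a careful verification that the slicing arrangement really realizes $\mathcal{D}$ as a single cell of $\Sigma$ (rather than as a union of several cells) and preserves rationality of the spanning rays, especially in the non-full-dimensional case $\dim\mathcal{D}<\dim\mathcal{D}'$, where one must also include the equality hyperplanes cutting out $\mathrm{span}(\mathcal{D})$. Both points follow from standard facts about rational polyhedral complexes, but they merit an explicit check.
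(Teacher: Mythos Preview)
Your proof is correct and follows the same route as the paper: pass to log-space, subdivide the cone $\mathcal{D}'$ so that $\mathcal{D}$ appears as a cell, and transport the decomposition back via $\exp$. The paper's proof is terser---it simply asserts the existence of such a subdivision without constructing it---whereas you supply a concrete mechanism (slicing by the facet hyperplanes of $\mathcal{D}$ and $\mathcal{D}'$, plus the span hyperplanes in the non-full-dimensional case) and are explicit about selecting the cells not contained in $\partial\mathcal{D}'$ so that the relative interiors partition $\mathring{\mathcal{D}}'$.
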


\begin{proof}
Let $\mathcal{D}$ and $\mathcal{D}'$ 
be the convex polyhedral cones in log-space that correspond to the toric cubes $\mathcal{C}$ 
and $\mathcal{C'}$. The cone $\mathcal{D}$ is contained 
in $\mathcal{D}'$ and there is a subdivision of the cone 
$\mathcal{D}'$ into cones $\,\mathcal{D}_1, \mathcal{D}_2,
 \ldots, \mathcal{D}_k$ such that $\mathcal{D} = \mathcal{D}_1$.
 Let $\mathcal{C}_1, \mathcal{C}_2, \ldots, \mathcal{C}_k$ denote the
 corresponding toric cubes. The required properties 
 follow directly from the fact that  log and exp
 are homeomorphisms. \qed
\end{proof}

Before proving Theorem \ref{thm:two}, we need one lemma
that takes into account the subdivisions of cells added in the process of building
the CW-complex.

\begin{lemma}\label{prop:buildCW}
Let $X$ be a CW-complex 
whose open cells are interiors of toric cubes,
and $\mathcal{C}_1, \mathcal{C}_2, \ldots,  \mathcal{C}_r$ 
further toric cubes, all embedded in a common unit cube. 
There is a CW-complex $\tilde{X}$ whose open 
cells are interiors of toric cubes, such that
 each open cell of $X$
 is a disjoint union of open cells in  $\tilde{X}$, 
 and $\sigma \cap \mathring{\mathcal{C}}_i 
 \in \{ \emptyset, \sigma\}$ for each open cell $\sigma$ of $\tilde{X}$.
\end{lemma}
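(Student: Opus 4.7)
The plan is to induct on $r$, so it suffices to treat the case $r = 1$; applying the construction once for each of $\mathcal{C}_1, \ldots, \mathcal{C}_r$ in turn then yields the final $\tilde{X}$. Write $\mathcal{C} := \mathcal{C}_1$, and call an open cell $\sigma$ of $X$ \emph{good} if $\sigma \cap \mathring{\mathcal{C}} \in \{\emptyset,\sigma\}$. Good cells of $X$ can be carried over unchanged to $\tilde{X}$, so the task reduces to subdividing each bad cell.

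For each bad open cell $\sigma = \mathring{\mathcal{D}}_\sigma$ of $X$, Proposition~\ref{prop:capTwoIntToricCubes} furnishes a toric cube $\mathcal{E}_\sigma \subseteq \mathcal{D}_\sigma$ with $\mathring{\mathcal{E}}_\sigma = \sigma \cap \mathring{\mathcal{C}}$, strictly between $\emptyset$ and $\sigma$. Since $\mathring{\mathcal{E}}_\sigma \subseteq \mathring{\mathcal{D}}_\sigma$, Lemma~\ref{prop:intToricCubes} subdivides $\sigma$ disjointly into toric cube interiors $\mathring{\mathcal{D}}_\sigma^{(1)} = \mathring{\mathcal{E}}_\sigma, \mathring{\mathcal{D}}_\sigma^{(2)}, \ldots, \mathring{\mathcal{D}}_\sigma^{(k_\sigma)}$, and Proposition~\ref{prop:charMap} equips each piece with a characteristic map.

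The main obstacle is ensuring that the subdivisions chosen on different cells agree along shared faces, so that the resulting decomposition is a genuine CW-complex in which the boundary of every new open cell is a subcomplex. I would arrange this by performing all subdivisions uniformly in log-space, using the global polyhedral data of $\mathcal{C}$. The log-image $\mathcal{C}^{\log}$ is cut out by a finite list $\mathcal{H}$ of rational half-space inequalities, and for each cell $\sigma$ the cone $\mathcal{D}_\sigma^{\log}$ sits in the coordinate subspace indexed by $\sigma$'s positive coordinates, onto which the inequalities in $\mathcal{H}$ restrict. Subdividing each $\mathcal{D}_\sigma^{\log}$ by the restricted hyperplanes in $\mathcal{H}$ produces pieces that are rational polyhedral cones (hence closures of toric cube interiors via Theorem~\ref{thm:one}), each whose relative interior is either contained in $\mathring{\mathcal{C}}^{\log}$ or disjoint from $\mathcal{C}^{\log}$, realizing precisely the subdivision of $\sigma$ predicted by Lemma~\ref{prop:intToricCubes}. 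Crucially, because the cutting hyperplanes are chosen once and for all, their restriction to a face $\mathcal{D}_\tau^{\log}$ of $\mathcal{D}_\sigma^{\log}$ coincides with the cutting hyperplanes used when subdividing $\mathcal{D}_\tau^{\log}$ directly. Hence adjacent subdivisions match on common faces, and sweeping through the cells of $X$ in order of increasing dimension assembles them into the desired CW-complex $\tilde{X}$, with characteristic maps supplied piecewise by Proposition~\ref{prop:charMap} and boundaries automatically contained in the lower-dimensional skeleton.
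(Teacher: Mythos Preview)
Your reduction to $r=1$ matches the paper, and your use of Proposition~\ref{prop:capTwoIntToricCubes} and Lemma~\ref{prop:intToricCubes} to split each bad cell is exactly the right local move. The difference is structural: the paper does not try to arrange global compatibility by a single hyperplane cut, but instead inducts on the dimension $m$ of $X$. It subdivides only the $m$-dimensional cells (via Lemma~\ref{prop:intToricCubes}), records the new pieces \emph{together with all of their Tuffley boundary cells} as a list $\mathcal{C}'_1,\ldots,\mathcal{C}'_t$, and then applies the lemma recursively to the $(m-1)$-skeleton with this enlarged list. The lower skeleton is thus refined \emph{after the fact} to absorb whatever boundary pieces the top-dimensional subdivision produced.

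The gap in your argument is precisely the step that this recursion is designed to handle. Your compatibility claim reads ``their restriction to a face $\mathcal{D}_\tau^{\log}$ of $\mathcal{D}_\sigma^{\log}$ coincides with the cutting hyperplanes used when subdividing $\mathcal{D}_\tau^{\log}$ directly.'' But $\mathcal{D}_\tau^{\log}$ is a face of $\mathcal{D}_\sigma^{\log}$ only when $\tau$ and $\sigma$ have the \emph{same} support (the same set of strictly positive coordinates). If $\tau\subseteq\partial\sigma$ has strictly smaller support, then in the log-picture of $\sigma$ the cell $\tau$ sits at infinity, and $\mathcal{D}_\tau^{\log}$ lives in a different coordinate subspace altogether; there is no literal ``restriction of hyperplanes'' from one to the other. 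Concretely, an inequality such as $Y_1+Y_3\geq Y_2$ does not restrict to any hyperplane in the $\{Y_1,Y_2\}$-subspace once $Y_3\to\infty$. What $\overline{\sigma_i}$ contributes to the support-$T$ stratum is determined by the Tuffley partition of the toric cube $\overline{\sigma_i}$, and you give no argument that this Tuffley boundary is a union of the cells produced by your subdivision of $\tau$. Your final sentence (``sweeping through the cells of $X$ in order of increasing dimension'') does not supply one either, since you never revisit and refine lower-dimensional cells. The paper's induction on $\dim X$, with the Tuffley boundaries fed back into the recursive call on the $(m-1)$-skeleton, is exactly the mechanism that closes this gap.
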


\begin{proof}
It suffices to show this for $r=1$ and 
 repeat the argument. Set
$\mathcal{C}=\mathcal{C}_1$.
The proof is by induction on the dimension $m$ of $X$.
If $m=0$ then we are done:
the intersection of a point and an 
 open cell is either empty or that point.

If $m>0$, then we use Lemma~\ref{prop:intToricCubes}
 to subdivide all $m$--dimensional open cells of $X$ 
 such that their intersection with $\mathring{\mathcal{C}}$ 
 is either empty or the open cell itself. Let
  $\mathring{\mathcal{C}}'_1, \ldots, \mathring{\mathcal{C}}'_t$ 
  be the new open cells and all open cells 
  on their boundaries given by the Tuffley partition. 
  Now, by induction, apply Lemma~\ref{prop:buildCW}
  to the $(m-1)$--skeleton of $X,$ with 
  the collection $\mathcal{C}$ and 
  $\mathcal{C}'_1, \ldots, \mathcal{C}'_t$ to 
  refine, to get an $(m-1)$--dimensional 
  CW-complex $X'$. We extend 
  $X'$ to $\tilde{X}$ by adding on the new
   open cells $\mathring{\mathcal{C}}'_1, \ldots, \mathring{\mathcal{C}}'_t$ we just constructed. Note that the 
   open cells added  from $X'$
    to $\tilde{X}$ need not  be 
    $m$--dimensional, but they cannot be
     on the boundary of anything in $X'$
      since they are in open $m$-cells of $X$. \qed
\end{proof}

\begin{proof}[of Theorem \ref{thm:two}]
Let $\mathcal{C}$ be a toric cube in $[0,1]^n$. The 
\emph{support} of a point in $[0,1]^n$ is the set of its strictly positive 
coordinates. Let $S_1, S_2, \ldots, S_t$ be a linear ordering 
of the subsets of $\{1,2, \ldots, n\}$ that each support a point in 
$\mathcal{C}$, where $i < j$ whenever $S_i \subset S_j$.
 Let $\mathcal{C}_k$ be the points 
in $\mathcal{C}$ with support $S_k$, and
$\mathring{\mathcal{C}}_k^1, \mathring{\mathcal{C}}_k^2, \ldots, \mathring{\mathcal{C}}_k^{s_k}$
 the open sets in the Tuffley partition of $\mathcal{C}$ 
whose union is $\mathcal{C}_k$.

We start building from the point $\mathcal{C}_1=\mathbf{0}$ to 
get the CW-complex $X_1.$
Next we will build a CW-complex $X_k$ on $\cup_{i=1}^k \mathcal{C}_i$
for every $k=2,3,\ldots,t.$ Note that this filtration is \emph{not}
by dimension, but rather by a linear extension of the set inclusion order
on the different supports.

 For $k=2,3, \ldots, t$, we proceed as follows:

\begin{itemize}
\item[(1)] 
A point on the boundary of a cell $\mathring{\mathcal{C}}_k^i$ 
is in $\mathcal{C}_k$ if the point and $\mathcal{C}_k$ have the 
same support. Otherwise the support of that point is smaller 
than that of $\mathcal{C}_k,$ and the point is in the CW-complex $X_{k-1}.$
\item[(2)] 
Let $\mathring{\mathcal{C}}_1,  \ldots, \mathring{\mathcal{C}}_{t_k}$ 
be the boundary cells of 
$\mathring{\mathcal{C}}_k^1, \mathring{\mathcal{C}}_k^2, \ldots, \mathring{\mathcal{C}}_k^{s_k}$ 
that have smaller support than $\mathcal{C}_k$. 
Now use Lemma~\ref{prop:buildCW} to subdivide the open 
cells of the CW-complex $X_{k-1}$ with respect to  
$\mathring{\mathcal{C}}_1, \ldots, \mathring{\mathcal{C}}_{t_k}$ 
to get the CW-complex $\tilde{X}_{k-1}.$ 
Any open cell on the boundary of a $\mathring{\mathcal{C}}_k^i$
whose support drops is a union of open cells in
$\tilde{X}_{k-1}.$ If the support doesn't drop, coherent
boundary maps are inherited from the log-cone of $\mathcal{C}_k.$
\item[(3)] We extending the CW-complex $\tilde{X}_{k-1}$
by the open cells
$\mathring{\mathcal{C}}_k^1, \mathring{\mathcal{C}}_k^2, \ldots, \mathring{\mathcal{C}}_k^{s_k}$.
By construction, their boundaries are subcomplexes. This
defines a CW-complex $X_k$ whose open cells 
are $\cup_{i=1}^k \mathcal{C}_i.$
\end{itemize}
The desired CW-complex $X_t$ for $\mathcal{C}=\cup_{i=1}^t \mathcal{C}_i$ has now been constructed. \qed
\end{proof}

\end{document}